\documentclass[12pt,reqno]{amsart}

\usepackage[left=3.5cm,right=3.5cm]{geometry}
\usepackage{amssymb}
\usepackage{graphicx}
\usepackage{amscd}
\usepackage[pagebackref]{hyperref}
\usepackage{color}
\usepackage{tabularx}
\usepackage[table]{xcolor}
\usepackage{float}
\usepackage{graphics,amsmath,amssymb}
\usepackage{amsthm}
\usepackage{amsfonts}
\usepackage{latexsym}
\usepackage{epsf}
\usepackage{xifthen}
\usepackage{mathrsfs}
\usepackage{dsfont}
\usepackage{makecell}
\usepackage{subfig}
\usepackage{amsmath}
\allowdisplaybreaks[4]
\usepackage{listings}
\usepackage{etoolbox}
\usepackage{fancyhdr}
\usepackage{pdflscape}
\usepackage[title,toc,titletoc]{appendix}
\usepackage{enumitem}
\usepackage[noadjust]{cite}
\usepackage{tikz}
\usetikzlibrary{automata,positioning,arrows}
\usepackage{young}
\usepackage[object=vectorian]{pgfornament} 
\usepackage{lipsum,tikz}
\usepackage{multirow}
\usepackage[OT2,T1]{fontenc}
\usepackage{mathtools}

\usepackage[normalem]{ulem} 
\usepackage{soul}

\hypersetup{
	colorlinks=true, 
	linktoc=all, 
	linkcolor=blue} 

\numberwithin{equation}{section}

\theoremstyle{plain}
\newtheorem{theorem}{Theorem}[section]
\newtheorem*{theorem*}{Theorem}

\newtheorem{corollary}[theorem]{Corollary}
\newtheorem{lemma}[theorem]{Lemma}

\newtheorem{innercustomgeneric}{\customgenericname}
\providecommand{\customgenericname}{}
\newcommand{\newcustomtheorem}[2]{%
	\newenvironment{#1}[1]
	{%
		\renewcommand\customgenericname{#2}%
		\renewcommand\theinnercustomgeneric{##1}%
		\innercustomgeneric
	}
	{\endinnercustomgeneric}
}
\newcustomtheorem{ctheorem}{Theorem}
\newcustomtheorem{clemma}{Lemma}

\theoremstyle{definition}

\newtheorem*{example*}{Example}
\newtheorem*{examples*}{Examples}
\newtheorem{remark}[theorem]{Remark}
\newtheorem*{remark*}{Remark}
\newtheorem*{remarks*}{Remarks}
\newtheorem*{note*}{Note}

\newtheoremstyle{named}{}{}{\itshape}{}{\bfseries}{.}{.5em}{#1\thmnote{ #3}}
\theoremstyle{named}

\makeatletter
\patchcmd{\subsection}{\bfseries}{\bfseries\boldmath}{}{}
\makeatother

\DeclareMathAlphabet{\mydutchcal}{U}{dutchcal}{m}{n}

\newcommand{\cPD}{\mydutchcal{PD}}
\newcommand{\cPDO}{\mydutchcal{PDO}}

\newcommand{\dis}{\mathsf{div}}
\newcommand{\len}{\mathsf{len}}

\newcommand{\UU}{\mathbf{U}}

\newcommand{\doi}[1]{\href{https://dx.doi.org/#1}{DOI: #1}}

\title[Convolutive sequences,~II]{Convolutive sequences,~II:~Parametrizations}

\author[S. Chern]{Shane Chern}
\address[S. Chern]{Fakult\"at f\"ur Mathematik, Universit\"at Wien, Oskar-Morgenstern-Platz 1, Wien 1090, Austria}
\email{chenxiaohang92@gmail.com, xiaohangc92@univie.ac.at}

\author[D. Eichhorn]{Dennis Eichhorn}
\address[D. Eichhorn]{Department of Mathematics, University of California Irvine, Irvine, CA 92697, USA}
\email{deichhor@math.uci.edu}

\author[S. Fu]{Shishuo Fu}
\address[S. Fu]{College of Mathematics and Statistics \& Center for Discrete Mathematics, Chongqing University, Chongqing 401331, China}
\email{fsshuo@cqu.edu.cn}

\author[J. A. Sellers]{James A. Sellers}
\address[J. A. Sellers]{Department of Mathematics, University of Minnesota Duluth, Duluth, MN 55811, USA}
\email{jsellers@d.umn.edu}

\date{}

\keywords{Convolutive sequence, parametrization, dissection, eta-product, partition with designated summands.}

\subjclass[2020]{11B83, 05A15, 05A17.}

\begin{document}
	
\sloppy

\begin{abstract}
	In recent work, the authors defined a sequence $(a_n)_{n\ge 0}$ to be $m$-convolutive exactly if 
	\begin{align*}
		\sum_{n\ge 0} a_{mn} q^n = \left(\sum_{n\ge 0} a_n q^n\right)^m
	\end{align*}
	for a specific positive integer $m$ and provided proofs of the $2$- and $3$-convolutivity of a small number of sequences arising from primitive eta-products. Since the completion of that work, the authors have discovered many new instances of convolutive eta-products. The main focus of this work is to unify all but one of these instances in a parametric way.
    
\end{abstract}

\maketitle

\section{Introduction}
\label{sec:intro}

In the inaugural paper of this series, the authors introduced the concept of \emph{$m$-convolutivity} for a sequence $(a_n)_{n\ge 0}$ if it satisfies the relation~\cite[Definition~1.1]{CEFS1}
\begin{align}\label{def:m-conv}
	\sum_{n\ge 0} a_{mn} q^n = \left(\sum_{n\ge 0} a_n q^n\right)^m
\end{align}
with $m$ a specific positive integer, and the generating series of $a_n$ is also called $m$-convolutive. Meanwhile, the left-hand side of this defining relation is usually represented in terms of the \emph{unitizing operator of degree $m$},
\begin{align*}
	\UU_m\left(\sum_{n\ge 0} a_n q^n\right) := \sum_{n\ge 0} a_{mn} q^n.
\end{align*}

The main objective of \cite{CEFS1} revolves around the case where the numbers $a_n$ are generated by a primitive eta-product. As in \cite[eq.~(1.3)]{CEFS1}, for distinct positive integers $m_j$ sorted in ascending order and integer exponents $\delta_j$, we consider the \emph{eta-product}
\begin{align}\label{eq:eta-product}
	\prod_{j=1}^J (q^{m_j};q^{m_j})^{\delta_j}_{\infty},
\end{align}
where the standard \emph{$q$-Pochhammer symbol} is given by
\begin{align*}
	(a;q)_\infty := \prod_{k\ge 0} (1-aq^k),
\end{align*}
while the shorthand notation
\begin{align*}
	f_i := (q^i;q^i)_\infty
\end{align*}
will be widely utilized throughout. We say the eta-product in \eqref{eq:eta-product} is \emph{primitive} if the moduli $m_1,\ldots,m_J$ have greatest common divisor $1$. In \cite{CEFS1}, we proved the $2$- and $3$-convolutivity for a small list of sequences of numbers acting as the Fourier coefficients of primitive eta-products. All of these sequences can be found in the OEIS database~\cite{OEIS}.

Now a natural question to ask is --- can we understand most of these convolutive relations, if not all, in a unified manner? In this direction, two of the authors~\cite{FuSel} have already made progress in their study of partitions into odd parts with designated summands. In essence, their result can be rephrased in the following \emph{parametric} way, and the equivalence to its original form in \cite[p.~3, Theorem~1.3]{FuSel} (see Theorem~\ref{th:FS}) will be explained in Section~\ref{sec:para-2-comb}.

\begin{theorem}[Fu--Sellers~\cite{FuSel}, equivalent form]\label{th:para-2-conv}
	Let $X$ be independent of $q$. Then
	\begin{align}\label{eq:para-2-conv}
		\UU_2\left(\frac{f_2^2}{f_1^2} (qX,q/X;q^2)_\infty\right) = \left(\frac{f_2^2}{f_1^2} (qX,q/X;q^2)_\infty\right)^2.
	\end{align}
\end{theorem}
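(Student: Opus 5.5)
The plan is to reduce \eqref{eq:para-2-conv} to two classical eta-quotient identities by expanding the $X$-dependent factor with the Jacobi triple product. Taking $z=-X$ in $(-zq,-q/z;q^2)_\infty(q^2;q^2)_\infty=\sum_{n\in\mathbb Z} z^n q^{n^2}$ gives
\begin{align*}
F(X;q):=\frac{f_2^2}{f_1^2}(qX,q/X;q^2)_\infty=\frac{f_2}{f_1^2}\,S(X;q),\qquad S(X;q):=\sum_{n\in\mathbb Z}(-1)^nX^nq^{n^2}.
\end{align*}
Splitting $n$ by parity gives $S=A-B$ with
\begin{align*}
A=\sum_k X^{2k}q^{4k^2},\qquad B=\sum_k X^{2k+1}q^{(2k+1)^2}.
\end{align*}
Here $A$ contains only even powers of $q$, and $B$ is $q$ times a series in $q^8$. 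So both sides of \eqref{eq:para-2-conv} become Laurent series in $X$ whose coefficients are eta-quotients times single powers of $q$, and we can compare them coefficient by coefficient.

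\textbf{Left-hand side.} Use the standard $2$-dissection
\begin{align*}
\frac{1}{f_1^2}=\frac{f_8^5}{f_2^5f_{16}^2}+2q\frac{f_4^2f_{16}^2}{f_2^5f_8}.
\end{align*}
Multiply by $f_2(A-B)$ and keep the even part in $q$. This part is
\begin{align*}
\frac{f_8^5}{f_2^4f_{16}^2}A-2q\frac{f_4^2f_{16}^2}{f_2^4f_8}B.
\end{align*}
Now replace $q^2$ by $q$. The $X^{2k}$-coefficient of $\UU_2(F)$ is then $\frac{f_4^5}{f_1^4f_8^2}\,q^{2k^2}$. The $X^{2k+1}$-coefficient is $-2\frac{f_2^2f_8^2}{f_1^4f_4}\,q^{((2k+1)^2+1)/2}$.

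\textbf{Right-hand side.} Write
\begin{align*}
F^2=\frac{f_2^2}{f_1^4}\sum_{m,n}(-X)^{m+n}q^{m^2+n^2},
\end{align*}
and change variables to $s=m+n$, $d=m-n$ (same parity), so that $m^2+n^2=(s^2+d^2)/2$.
\begin{itemize}
\item For $s=2j$, the inner sum over even $d=2e$ is $\sum_e q^{2e^2}=\varphi(q^2)$. The $X^{2j}$-coefficient is therefore $\frac{f_2^2}{f_1^4}\varphi(q^2)\,q^{2j^2}$.
\item For $s=2j+1$, the inner sum over odd $d$ is $\sum_e q^{((2e+1)^2-1)/2}=2\psi(q^4)$. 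The $X^{2j+1}$-coefficient is therefore $-2\frac{f_2^2}{f_1^4}\psi(q^4)\,q^{((2j+1)^2+1)/2}$.
\end{itemize}
The powers of $q$ here match those found for the left-hand side.

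\textbf{Matching.} Comparing coefficients, the theorem reduces to two identities:
\begin{align*}
f_2^2\varphi(q^2)=\frac{f_4^5}{f_8^2},\qquad f_2^2\psi(q^4)=\frac{f_2^2f_8^2}{f_4}.
\end{align*}
Both follow from the classical product forms $\varphi(q)=f_2^5/(f_1^2f_4^2)$ and $\psi(q)=f_2^2/f_1$, after replacing $q$ by $q^2$ and $q^4$ respectively.

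The main obstacle is bookkeeping rather than ideas. One must keep the sign and the factor $2$ in the odd-$s$ terms straight: the factor $2$ comes from pairing $d$ with $-d$, and it has to match the $2q$ in the dissection of $1/f_1^2$. One must also check that the exponents $(s^2+d^2)/2$ on the right align exactly with $q^{(2k+1)^2}\mapsto q^{((2k+1)^2+1)/2}$ on the left after $\UU_2$. The dissection of $1/f_1^2$ itself is standard (it is equivalent to $\varphi(q)=\varphi(q^4)+2q\psi(q^8)$ combined with $1/f_1^2=\varphi(q)\,f_4^2/f_2^5$), and I would cite or quickly derive it that way.
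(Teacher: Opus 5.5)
Your proof is correct and takes essentially the same route as the paper: you use the $2$-dissection of $1/f_1^2$, split the Jacobi triple product series by parity, substitute $s=m+n$, $d=m-n$ in the squared theta series, and identify $\varphi(q^2)$ and $2\psi(q^4)$ as eta-quotients. The differences are only cosmetic: the paper removes the factor $f_2$ through Lemma~\ref{criterion-new} with $B(q)=f_1$ and rewrites both sides as triple products in $X^2$, whereas you absorb $f_2$ into the dissection directly and compare coefficients of each power of $X$.
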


This parametric relation is of particular importance because it places four of the five $2$-convolutive eta-products in our first paper within the same setting. By choosing $X$ to be $1$, $-1$, $\omega:=e^{\frac{2 \pi i}{3}}$, $i:=e^{\frac{\pi i}{2}}$, and $\zeta:=e^{\frac{\pi i}{3}}$, respectively, we have
\begin{align*}
	&(q,q;q^2)_\infty = \frac{f_1^2}{f_2^2}, \quad && (-q,-q;q^2)_\infty = \frac{f_2^4}{f_1^2 f_4^2},\\
	&(\omega q, q/\omega;q^2)_\infty = \frac{f_2 f_3}{f_1 f_6}, \quad && (i q, q/i;q^2)_\infty = \frac{f_4^2}{f_2 f_8},\\
	&(\zeta q, q/\zeta;q^2)_\infty = \frac{f_1 f_4 f_6^2}{f_2^2 f_3 f_{12}}.
\end{align*}
Then the eta-products in \cite[Theorems~3.7, 3.6, 3.8, and 3.4]{CEFS1}, listed as the first four series in Table~\ref{tab:list-2-conv}, are, respectively,
\begin{align*}
	&\frac{f_2^2}{f_1^2} (-q,-q;q^2)_\infty = \frac{f_2^6}{f_1^4 f_4^2}, \quad &&
	\frac{f_2^2}{f_1^2} (\omega q, q/\omega;q^2)_\infty = \frac{f_2^3 f_3}{f_1^3 f_6},\\
	&\frac{f_2^2}{f_1^2} (i q, q/i;q^2)_\infty = \frac{f_2 f_4^2}{f_1^2 f_8}, \quad && 
	\frac{f_2^2}{f_1^2} (\zeta q, q/\zeta;q^2)_\infty = \frac{f_4 f_6^2}{f_1 f_3 f_{12}}.
\end{align*}

It should be noted that the search for convolutive sequences in our first paper~\cite{CEFS1} was conducted only on OEIS entries, and this was insufficient to detect all possible $2$-convolutive eta-products for our parametrization purposes. Hereby, we execute a new search, this time directly for primitive eta-products. In doing so, we identify two more $2$-convolutive examples, represented by the series (II.5) and (II.6) in Table~\ref{tab:list-2-conv}.

\begin{table}[ht]
	\renewcommand{\arraystretch}{2.4} 
	\caption{Primitive $2$-convolutive eta-products}\label{tab:list-2-conv}
	\centering
	\smallskip
	\begin{tabular}{|p{2em}>{\centering\arraybackslash}p{0.2\textwidth}|p{2.5em}>{\centering\arraybackslash}p{0.2\textwidth}|} 
		\hline
		\multicolumn{2}{|l}{\makecell[l]{\textbf{Key:}\\[2pt]$^\mathsection$Newly discovered\\[2pt]$^\dagger$Parametric, Thm.~\ref{th:para-2-conv}}} & \multicolumn{2}{l|}{\makecell[l]{~\\[2pt]~\\[2pt]$^\ddagger$Parametric, Thm.~\ref{th:para-2-conv-II}}}\\[5pt]
		\hline
		(II.1)$^{\dagger}$ &  $\dfrac{f_2^6}{f_1^4 f_4^2}$ & (II.2)$^{\dagger,\ddagger}$ & $\dfrac{f_2^3 f_3}{f_1^3 f_6}$ \\
		(II.3)$^{\dagger}$ & $\dfrac{f_2 f_4^2}{f_1^2 f_8}$ & (II.4)$^{\dagger,\ddagger}$ & $\dfrac{f_4 f_6^2}{f_1 f_3 f_{12}}$\\
		(II.5)$^{\mathsection,\ddagger}$ & $\dfrac{f_1f_2f_6}{f_3^3}$ & (II.6)$^{\mathsection,\ddagger}$ & $\dfrac{f_2f_3f_6}{f_1^2 f_9}$  \\
		(II.7) & $\dfrac{f_6 f_{10}}{f_1 f_{15}}$ &&\\ [10pt]
		\hline
	\end{tabular}
\end{table}

Now the key observation is that
\begin{align*}
    \frac{f_2 f_6}{f_1 f_3} (q,q^2,q,q^2;q^3)_\infty &= \frac{f_1f_2f_6}{f_3^3},\\
    \frac{f_2 f_6}{f_1 f_3} (-q,-q^2,-q,-q^2;q^3)_\infty &= \frac{f_2^3 f_3}{f_1^3 f_6},\\
    \frac{f_2 f_6}{f_1 f_3} (\omega q,q^2/\omega,q/\omega,\omega q^2;q^3)_\infty &= \frac{f_2f_3f_6}{f_1^2 f_9},\\
    \frac{f_2 f_6}{f_1 f_3} (iq,q^2/i,q/i,iq^2;q^3)_\infty &= \frac{f_4 f_6^2}{f_1 f_3 f_{12}}.
\end{align*}
These are (II.5), (II.2), (II.6), and (II.4) in Table~\ref{tab:list-2-conv}, respectively. Also, we find that
\begin{align*}
    \frac{f_2 f_6}{f_1 f_3} (\zeta q,q^2/\zeta,q/\zeta,\zeta q^2;q^3)_\infty = \frac{f_6^3 f_9}{f_3^3 f_{18}}.
\end{align*}
However, this relation does not produce any new information because for $l$ and $m$ coprime, if a series $S(q)$ is $m$-convolutive, then so is the series $S(q^l)$, and vice versa. In view of this fact, the primitive counterpart of the above eta-product, obtained by replacing $q^3$ with $q$, is exactly (II.2).

The previous discussions can be summarized as a second parametrization for $2$-convolutive series, with the two newly discovered primitive eta-products (II.5) and (II.6) encoded.

\begin{theorem}\label{th:para-2-conv-II}
	Let $X$ be independent of $q$. Then
	\begin{align}\label{eq:para-2-conv-II}
		\UU_2\left(\frac{f_2 f_6}{f_1 f_3} (qX,q^2/X,q^2X,q/X;q^3)_\infty\right) = \left(\frac{f_2 f_6}{f_1 f_3} (qX,q^2/X,q^2X,q/X;q^3)_\infty\right)^2.
	\end{align}
\end{theorem}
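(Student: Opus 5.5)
The plan is to treat both sides of \eqref{eq:para-2-conv-II} as Laurent series in $X$ whose coefficients are $q$-series. I would reduce the identity to a functional equation in $X$ together with the verification of finitely many coefficients. The first step is a simplification of the product. Since $(qX;q)_\infty=(qX,q^2X,q^3X;q^3)_\infty$, we have
\begin{align*}
F(X):=\frac{f_2 f_6}{f_1 f_3}(qX,q^2/X,q^2X,q/X;q^3)_\infty=\frac{f_2 f_6}{f_1 f_3}\cdot\frac{(qX,q/X;q)_\infty}{(q^3X,q^3/X;q^3)_\infty}.
\end{align*}
Alternatively, by the Jacobi triple product,
\begin{align*}
(qX,q^2/X;q^3)_\infty f_3=\sum_{n}(-1)^nX^nq^{n(3n-1)/2},
\end{align*}
and the same holds with $X\mapsto X^{-1}$ for the other pair. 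This gives
\begin{align*}
F(X)=\frac{f_2f_6}{f_1f_3^3}\,T(X)\,T(X^{-1}),\qquad T(X):=\sum_n(-1)^nX^nq^{n(3n-1)/2}.
\end{align*}

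Next I would derive the quasi-periodicity of $F$ in $X$. Each theta factor transforms under $X\mapsto q^3X$ by an explicit monomial factor $\pm X^{a}q^{b}$. Hence $F(q^3X)=c\,X^{\alpha}q^{\beta}F(X)$, and so $F(q^6X)$ also satisfies a relation of this kind. In terms of the coefficients $F(X)=\sum_k C_k(q)X^k$, this becomes $C_{k+r}(q)=\pm q^{\text{(quadratic in }k)}C_k(q)$ for some fixed $r$. I would pick the shift ($X\mapsto q^6X$, possibly combined with $X\mapsto X^{-1}$ symmetry) so that the $q$-exponent is always even. Then $\UU_2$ commutes with the recursion after halving the exponent: $\UU_2(q^{2e}C)=q^{e}\,\UU_2(C)$. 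This shows that $\UU_2F(X)$ satisfies the same functional equation as $F(X)^2$, namely the one obtained from $F(q^3X)$ squared. Consequently both sides lie in the same finite-dimensional space of theta functions in $X$: the coefficient sequence of each side is determined by finitely many coefficients $C_0,\dots,C_{r-1}$, and the symmetry $X\leftrightarrow X^{-1}$ cuts this number roughly in half.

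It then remains to check that finitely many coefficients agree as $q$-series. For these I would write $T(X)T(X^{-1})$ as a double sum over $j=n-m$ and $s=n+m$ of equal parity. Its exponent is $\tfrac34(s^2+j^2)-\tfrac s2$, so the coefficient of $X^j$ is a single theta series in $q$. Splitting by the parity of $j$ expresses the relevant coefficients as eta-quotients times simple theta series. I would then $2$-dissect the prefactor $f_2f_6/(f_1f_3^3)$, using the standard $2$-dissections of $1/f_1$, $1/f_3^{2}$ or $f_1f_3$-type products. Together with the $2$-dissections of the theta series, this identifies the even part of each coefficient, which is then compared with the corresponding coefficient of $F(X)^2$ by eta-quotient identities. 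Those comparisons could be certified by the modular-function (Sturm bound) method if a direct manipulation is not apparent.

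The main obstacle is the second half of the third step. The quasi-periodicity bookkeeping must actually produce even $q$-shifts, so that $\UU_2$ passes through; I expect this to force the period $q^6$ rather than $q^3$, enlarging the set of base coefficients. After that, the $2$-dissection identities for the base coefficients have to close up into exact eta-quotient equalities. If this proves messy, my fallback is to specialize $X$ to enough values, such as $X=\pm1,\omega,i$, which correspond to the known $2$-convolutive products (II.5), (II.2), (II.6) and (II.4). Each specialization gives an eta-product identity provable by modular methods. Because the space of admissible theta functions is finite-dimensional, agreement at sufficiently many generic points would then force equality. For this fallback to work, the number of specializations used must exceed that dimension.
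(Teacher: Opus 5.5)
Your argument is correct in outline, but it takes a different route from the paper.

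\textbf{Your route.} You keep $F$ intact and use $F(q^3X)=q^{-3}X^{-2}F(X)$, that is, $C_{j+2}=q^{3j+3}C_j$. (The exponent is linear in $j$, not quadratic.) Hence $C_{j+4}=q^{6j+12}C_j$, and both $\UU_2F$ and $F^2$ satisfy $G_{j+4}=q^{3j+6}G_j$. With the $X\leftrightarrow X^{-1}$ symmetry, only the coefficients $j=0,1,2$ remain to be checked.

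\textbf{The paper's route.} The paper needs neither this reduction nor the square. It applies Lemma~\ref{criterion-new} with $A(q)=\frac{1}{f_1f_3}(qX,q^5/X,q^6;q^6)_\infty(q^5X,q/X,q^6;q^6)_\infty$ and $B(q)=\frac{f_1}{f_3}(qX,q^2/X,q^2X,q/X;q^3)_\infty$. This makes the target $A(q)^2B(q^2)^2/B(q)$ \emph{linear} in a product of two theta functions. After the $2$-dissection of $1/(f_1f_3)$ and the same parity change of variables you describe, both sides become combinations of the two fixed $X$-theta functions $(-q^3X^2,-q^3/X^2,q^6;q^6)_\infty$ and $(-q^6X^2,-1/X^2,q^6;q^6)_\infty$. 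So only two eta-quotient coefficients must be matched, by hand, and this covers all $X$ at once.

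\textbf{What remains in yours.} Comparing with $F^2$ produces squares and cross products of $\sum_l q^{3l^2-l}$ and $\sum_l q^{3l^2+2l}$ against theta series in $q^6$. You only promise the three resulting identities via Sturm bounds, so your proof is complete only once they are written down and certified.

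\textbf{What your route buys.} It gives the structural fact that the parametric identity is equivalent to finitely many scalar ones, and your fallback genuinely works. The symmetric solution space is $3$-dimensional over $\mathbb{Q}((q))$, with basis $\sum_l X^{4l}q^{6l^2}$, $\sum_k X^{2k+1}q^{3k(k+1)/2}$, $\sum_l X^{4l+2}q^{6l^2+6l}$. The evaluation matrix at $X=1,-1,i$ has determinant equal to $4$ times the product of these three series at $X=1$, which is nonzero. Hence Theorem~\ref{th:para-2-conv-II} follows from the specializations (II.5), (II.2), (II.4) alone. Note that you need as many points as the dimension with independent evaluations, not more points than the dimension.
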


\begin{remark}
    Both parametrizations \eqref{eq:para-2-conv} and \eqref{eq:para-2-conv-II} do not specialize to the last eta-product, (II.7), in Table~\ref{tab:list-2-conv}, making this series the most mysterious. Such an eta-product will be called \emph{sporadic}, as opposed to the \emph{parametric} ones.
\end{remark}

For $3$-convolutive eta-products, one may wonder if a similar parametric generalization exists. However, in \cite{CEFS1} only four $3$-convolutive examples were recorded, making it less likely to find patterns. As such, we begin with a direct search for $3$-convolutive eta-products, which suggests the primitive examples in Table~\ref{tab:list}, with the series (III.1) and (III.2) already proven in \cite[Theorems~4.3 and 4.4]{CEFS1} and the series (III.1') and (III.2') also justified in \cite[Remark~4.1]{CEFS1}, while all others are newly discovered.

\begin{table}[ht]
	\renewcommand{\arraystretch}{2.4} 
    \caption{Primitive $3$-convolutive eta-products}\label{tab:list}
	\centering
	\smallskip
	\begin{tabular}{|p{2em}>{\centering\arraybackslash}p{0.2\textwidth}|p{2.5em}>{\centering\arraybackslash}p{0.2\textwidth}|} 
		\hline
        \multicolumn{4}{|l|}{\makecell[l]{\textbf{Key:}\\[2pt]$^\mathsection$Newly discovered}}\\[5pt]
		\hline
		\multicolumn{2}{|c|}{$F(q)$} & \multicolumn{2}{|c|}{$F(-q)$} \\
		\hline
		(III.1) &  $\dfrac{f_2f_3^2}{f_1^2f_6}$ & (III.1') & $\dfrac{f_1^2f_4^2f_6^5}{f_2^5f_3^2f_{12}^2}$ \\
		(III.2) & $\dfrac{f_2f_6^3}{f_1f_3f_4f_{12}}$ & (III.2') & $\dfrac{f_1 f_3 }{ f_2^2}$\\
		(III.3)$^{\mathsection}$ & $\dfrac{f_2^7 f_6 }{ f_1^4 f_4^4}$ & (III.3')$^{\mathsection}$ & $\dfrac{f_1^4 f_6 }{ f_2^5}$  \\
		(III.4)$^{\mathsection}$ & $\dfrac{f_2^4 f_3 }{ f_1^3 f_4^2}$ & (III.4')$^{\mathsection}$ & $\dfrac{f_1^3 f_4 f_6^3 }{ f_2^5 f_3 f_{12}}$\\
		(III.5)$^{\mathsection}$ & $\dfrac{f_3 f_4^2 }{ f_1 f_2 f_8}$ & (III.5')$^{\mathsection}$ & $\dfrac{f_1 f_4^3 f_6^3 }{ f_2^4 f_3 f_8 f_{12}}$ \\
		(III.6)$^{\mathsection}$ & $\dfrac{f_2^2 f_6 }{ f_1^2 f_8}$ & (III.6')$^{\mathsection}$ & $\dfrac{f_1^2 f_4^2 f_6 }{ f_2^4 f_8}$ \\
		(III.7)$^{\mathsection}$ & $\dfrac{f_5 f_6 }{ f_1 f_{10}}$ & (III.7')$^{\mathsection}$ & $\dfrac{f_1 f_4 f_6 f_{10}^2 }{ f_2^3 f_5 f_{20}}$  \\ [10pt]
		\hline
	\end{tabular}
\end{table}

We remark that Table~\ref{tab:list} should be considered in the following way. Given a sequence $(a_n)_{n\ge 0}$, its \emph{dual} sequence $(a'_n)_{n\ge 0}$ is defined by the relation $a'_n:=(-1)^na_n$. As noted in \cite[Remark~4.1]{CEFS1}, if a sequence is $3$-convolutive, then so is its dual, and vice versa. In this sense, the series listed in Table~\ref{tab:list} can be naturally paired, and once one series in such a pair is proven to be $3$-convolutive, then so is the other. More precisely, the series labeled by a prime can be obtained by replacing $q$ with $-q$ in the corresponding series; such equalities can be shown by the simple $q$-series relation that
\begin{align*}
	(-q;-q)_\infty = \frac{f_2^3}{f_1f_4}.
\end{align*}

In view of the first six pairs in Table~\ref{tab:list}, the following pattern occurs:
\begin{align*}
	&\frac{f_2f_3^2}{f_1^2f_6} = \frac{f_6}{f_2}(\omega q, q/\omega,\omega q, q/\omega;q^2)_\infty, \quad && \frac{f_1^2f_4^2f_6^5}{f_2^5f_3^2f_{12}^2} = \frac{f_6}{f_2}(\zeta q, q/\zeta,\zeta q, q/\zeta;q^2)_\infty,\\
	&\frac{f_2f_6^3}{f_1f_3f_4f_{12}} = \frac{f_6}{f_2}(-q, -q,\zeta q, q/\zeta;q^2)_\infty, \quad && \frac{f_1 f_3 }{ f_2^2} = \frac{f_6}{f_2}(q,q,\omega q, q/\omega;q^2)_\infty,\\
	&\frac{f_2^7 f_6 }{ f_1^4 f_4^4} = \frac{f_6}{f_2}(-q,-q,-q,-q;q^2)_\infty, \quad && \frac{f_1^4 f_6 }{ f_2^5} = \frac{f_6}{f_2}(q,q,q,q;q^2)_\infty,\\
	&\frac{f_2^4 f_3 }{ f_1^3 f_4^2} = \frac{f_6}{f_2}(-q,-q,\omega q, q/\omega;q^2)_\infty, \quad && \frac{f_1^3 f_4 f_6^3 }{ f_2^5 f_3 f_{12}} = \frac{f_6}{f_2}(q,q,\zeta q, q/\zeta;q^2)_\infty,\\
	&\frac{f_3 f_4^2 }{ f_1 f_2 f_8} = \frac{f_6}{f_2}(\omega q, q/\omega,i q, q/i;q^2)_\infty, \quad && \frac{f_1 f_4^3 f_6^3 }{ f_2^4 f_3 f_8 f_{12}} = \frac{f_6}{f_2}(i q, q/i,\zeta q, q/\zeta;q^2)_\infty,\\
	&\frac{f_2^2 f_6 }{ f_1^2 f_8} = \frac{f_6}{f_2}(-q,-q,i q, q/i;q^2)_\infty, \quad && \frac{f_1^2 f_4^2 f_6 }{ f_2^4 f_8} = \frac{f_6}{f_2}(q,q,i q, q/i;q^2)_\infty.
\end{align*}
Moreover, letting $\rho:=e^{\frac{\pi i}{5}}$, we find that the same pattern also works for the last pair:
\begin{align*}
    \frac{f_5 f_6 }{ f_1 f_{10}} = \frac{f_6}{f_2}(\rho^2 q, q/\rho^2,\rho^4 q, q/\rho^4;q^2)_\infty, \ \quad\  \dfrac{f_1 f_4 f_6 f_{10}^2 }{ f_2^3 f_5 f_{20}} = \frac{f_6}{f_2}(\rho q, q/\rho,\rho^3 q, q/\rho^3;q^2)_\infty.
\end{align*}

Hence, we are led to consider the biparametric $3$-convolutive relation in the next theorem, which surprisingly encodes \emph{all} fourteen primitive eta-products in Table~\ref{tab:list}.

\begin{theorem}\label{th:para-3-conv}
	Let $X$ and $Y$ be independent of $q$. Then
	\begin{align}\label{eq:para-3-conv}
		\UU_3\left(\frac{f_6}{f_2} (qX,q/X,qY,q/Y;q^2)_\infty\right) = \left(\frac{f_6}{f_2} (qX,q/X,qY,q/Y;q^2)_\infty\right)^3.
	\end{align}
\end{theorem}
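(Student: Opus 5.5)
We would first linearize the two $q^2$-products with the Jacobi triple product. Put
\begin{align*}
\theta(X):=\sum_{n\in\mathbb{Z}}(-1)^nX^nq^{n^2}=f_2\,(qX,q/X;q^2)_\infty .
\end{align*}
Then the series in \eqref{eq:para-3-conv} becomes
\begin{align*}
F(X,Y):=\frac{f_6}{f_2}(qX,q/X,qY,q/Y;q^2)_\infty=\frac{f_6}{f_2^3}\,\theta(X)\theta(Y).
\end{align*}
Expanded as a Laurent series in $X$ and $Y$, its coefficients are completely explicit:
\begin{align*}
[X^aY^b]F=(-1)^{a+b}q^{a^2+b^2}\frac{f_6}{f_2^3}.
\end{align*}
Hence the left-hand side of \eqref{eq:para-3-conv} has coefficients
\begin{align*}
c_{a,b}:=(-1)^{a+b}\,\UU_3\!\left(q^{a^2+b^2}\frac{f_6}{f_2^3}\right).
\end{align*}

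The second step is a quasi-periodicity argument that reduces the two-variable identity to finitely many one-variable identities. Since $(a+3)^2=a^2+6a+9$, and $q^{6a+9}=(q^3)^{2a+3}$ passes through $\UU_3$ as $q^{2a+3}$, we get
\begin{align*}
c_{a+3,b}=-q^{2a+3}c_{a,b},
\end{align*}
and the same relation holds in $b$. This translates into
\begin{align*}
L(q^2X,Y)=-q^{-3}X^{-3}L(X,Y)
\end{align*}
for the left-hand side $L$, and likewise in $Y$. The right-hand side $F^3=f_6^3f_2^{-9}\theta(X)^3\theta(Y)^3$ obeys exactly the same functional equations, because $\theta(q^2X)=-(qX)^{-1}\theta(X)$. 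A Laurent series satisfying these equations is determined by its coefficients of $X^aY^b$ with $a,b\in\{-1,0,1\}$. By the symmetries $X\leftrightarrow1/X$, $Y\leftrightarrow1/Y$ and $X\leftrightarrow Y$, it therefore suffices to check the coefficients with $(a,b)\in\{(0,0),(1,0),(1,1)\}$. Since $a^2+b^2\in\{0,1,2\}$, these three left-hand coefficients are exactly the three components of the $3$-dissection of $f_6/f_2^3$.

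For the left-hand side, we would $3$-dissect $1/f_1^3$ and then replace $q$ by $q^2$. To do this, we would write $1/f_1^3=f_1^{6}\cdot f_1^{-9}$, or use $f_1^3=a$-type cubic theta expressions. The cleaner route is the classical identity
\begin{align*}
f_1^3=\frac{f_6f_9^6}{f_3f_{18}^3}+4q^3\frac{f_3^2f_{18}^6}{f_6^2f_9^3}-3qf_9^3,
\end{align*}
combined with rationalizing through the norm over the three components. Alternatively, we would quote the known $3$-dissection of $1/f_1^3$ from the literature. For the right-hand side, we need $[X^a]\theta(X)^3$ for $a=0,1$, i.e. sums $\sum_{n_1+n_2+n_3=a}q^{n_1^2+n_2^2+n_3^2}$ with signs. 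These are cubic-lattice theta series, which we would express as eta-quotients (Borwein-type $a(q),b(q),c(q)$ identities, after $q\to q^2$ and sign twists).

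The main obstacle is this last matching: verifying the three resulting identities between explicit eta-quotients, such as
\begin{align*}
\UU_3\!\left(\frac{f_6}{f_2^3}\right)=\frac{f_6^3}{f_2^9}\Big([X^0]\theta(X)^3\Big)^2 .
\end{align*}
If the closed forms do not match directly via known identities, we would fall back on modular forms. Each side, suitably multiplied by a power of $q$, is a modular function on some $\Gamma_0(N)$ with $N\mid 36$ or $N\mid 72$. A finite check up to the Sturm bound, or a valence-formula argument as in \cite{CEFS1}, then finishes each of the three identities.
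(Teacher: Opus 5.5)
Your proof is correct. It rests on the same analytic input as the paper but reaches it by a different reduction.

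The paper keeps the parameters inside theta products. After moving $f_6$ out with Lemma~\ref{criterion-new}, it $3$-dissects $\theta(X)=F_1(X,q^3)-qXF_2(X,q^3)-qX^{-1}F_3(X,q^3)$. It then imports Hirschhorn's cube identity $\theta(X)^3=a(q^2)F_1(X,q)-3q\frac{f_6^3}{f_2}\big(XF_2(X,q)+X^{-1}F_3(X,q)\big)$, so that both sides are expanded in the same products $F_i(X,q)F_j(Y,q)$ and can be compared.

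You instead extract the coefficient of $X^aY^b$. You then use the quasi-periodicity $c_{a+3,b}=-q^{2a+3}c_{a,b}$, which $\theta(X)^3\theta(Y)^3$ shares, to cut everything down to $(a,b)\in\{(0,0),(1,0),(1,1)\}$. This is the mechanism behind the paper's basis $F_1,\,XF_2,\,X^{-1}F_3$, but it spares you the cube identity. You only need $[X^0]\theta(X)^3=a(q^2)$, which is immediate from $n_1^2+n_2^2+(n_1+n_2)^2=2(n_1^2+n_1n_2+n_2^2)$. You also need $[X^1]\theta(X)^3=-3qf_6^3/f_2$, which is the product formula for Borwein's $c(q)$. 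Your reduction also makes plain that the two-parameter statement carries exactly the information of the three components of the $3$-dissection of $1/f_1^3$.

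The step you flag as the main obstacle is not one, and no modular-forms or Sturm-bound fallback is needed. Keep $a(q^2)$ as it is; it is not an eta-quotient, so do not try to convert it. Rationalize $f_1^3=a(q^3)f_3-3qf_9^3$ by multiplying its $q\mapsto\omega q$ and $q\mapsto\omega^2 q$ versions, as the paper does. This gives
$1/f_1^3=a(q^3)^2f_9^3/f_3^{10}+3qa(q^3)f_9^6/f_3^{11}+9q^2f_9^9/f_3^{12}$.
Hence $c_{0,0}=a(q^2)^2f_6^3/f_2^9$, $c_{1,0}=-3qa(q^2)f_6^6/f_2^{10}$ and $c_{1,1}=9q^2f_6^9/f_2^{11}$. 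These are exactly $\frac{f_6^3}{f_2^9}\,[X^a]\theta(X)^3\,[Y^b]\theta(Y)^3$ for the three pairs.

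The eta-quotient expansion of $a(q^3)f_3$ you quote is correct, but using it only makes this matching messier.
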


Notably, if we want the series
\begin{align*}
	\frac{f_6}{f_2} (qX,q/X,qY,q/Y;q^2)_\infty
\end{align*}
in Theorem~\ref{th:para-3-conv} to be an eta-product, while $X,Y\in \{\rho,\rho^2,\rho^3,\rho^4\}$, the only possibilities are given by the last pair in Table~\ref{tab:list}. Meanwhile, if we require $X,Y\in \{1,-1,\omega,i,\zeta\}$, then the series specializes to twelve different primitive eta-products, all appearing among the first six pairs in the same table. There are also three nonprimitive cases:
\begin{align*}
	&\frac{f_6}{f_2} (q,q,-q,-q;q^2)_\infty = \frac{f_2f_6}{f_4^2}, \quad && \frac{f_6}{f_2} (\omega q,q/\omega,\zeta q,q/\zeta;q^2)_\infty = \frac{f_4f_6^2}{f_2^2f_{12}},\\
	&\frac{f_6}{f_2} (iq,q/i,iq,q/i;q^2)_\infty = \frac{f_4^4f_6}{f_2^3f_8^2},
\end{align*}
and their primitive counterparts, obtained by replacing $q^2$ with $q$ throughout each eta-product, are still among the aforementioned twelve members. 

\textbf{Outline of the paper.} In Section~\ref{sec:para-2-comb}, we review the combinatorial motivation described in \cite{FuSel} for our parametric $2$-convolutivity in Theorem~\ref{th:para-2-conv} and demonstrate the equivalence between \eqref{eq:para-2-conv} and the combinatorial relation \eqref{eq:2-conv-PDO-dis-comb}. In addition, we connect a uniparametric specialization of the $3$-convolutive relation in Theorem~\ref{th:para-3-conv} to a signed counting for partitions with designated summands in a manner akin to that in \cite{FuSel}. Following this, in Section~\ref{sec:para-2}, we provide an alternative proof of Theorem~\ref{th:para-2-conv} using a theta-dissection technique, while in Sections~\ref{sec:para-2-II} and \ref{sec:para-3}, we establish Theorems~\ref{th:para-2-conv-II} and \ref{th:para-3-conv}, respectively, based on an analogous but more delicate analysis. Finally, we close in Section~\ref{sec:conclusion} with some comments and questions for future study.

\section{Combinatorial motivation}\label{sec:para-2-comb}

Andrews, Lewis, and Lovejoy~\cite{ALL2002} introduced \emph{partitions with designated summands} as partitions such that exactly one part of each size in the partition is marked. For example, $3'+3+2+2'+2+1'$ is a partition of $13$ with designated summands in which we mark the first part of size $3$, the second part of size $2$, and the only part of size $1$.

Denote by $\cPD$ the set of partitions with designated summands, and further by $\cPDO$ the set of partitions with designated summands wherein all parts are odd. An important discovery of Andrews, Lewis, and Lovejoy is the generating function identity~\cite[p.~52, eq.~(1.6)]{ALL2002}
\begin{align*}
	\sum_{\lambda\in \cPDO} q^{|\lambda|} = \frac{f_4 f_6^2}{f_1 f_3 f_{12}},
\end{align*}
where $|\lambda|$ is the \emph{weight} of the partition $\lambda$, namely, the sum of all its parts. In particular, the eta-product on the right-hand side of the above satisfies the $2$-convolutive relation~\cite[p.~63, Theorem~21]{ALL2002}
\begin{align}\label{eq:2-conv-PDO}
	\UU_2\left(\frac{f_4 f_6^2}{f_1 f_3 f_{12}}\right) = \left(\frac{f_4 f_6^2}{f_1 f_3 f_{12}}\right)^2,
\end{align}
thereby yielding the following combinatorial result:
\begin{align}\label{eq:2-conv-PDO-comb}
	\UU_2\left(\sum_{\lambda\in \cPDO} q^{|\lambda|}\right) = \left(\sum_{\lambda\in \cPDO} q^{|\lambda|}\right)^2.
\end{align}

For a partition $\lambda$ with designated summands, let $\dis(\lambda)$ be the \emph{diversity} of $\lambda$, that is, the number of different part sizes in $\lambda$. In \cite[p.~3, Theorem~1.3]{FuSel}, two of the authors provided a uniparametric generalization of \eqref{eq:2-conv-PDO-comb}.

\begin{theorem}[Fu--Sellers~\cite{FuSel}, original form]\label{th:FS}
	Let $x$ be independent of $q$. Then
	\begin{align}\label{eq:2-conv-PDO-dis-comb}
		\UU_2\left(\sum_{\lambda\in \cPDO} x^{\dis(\lambda)} q^{|\lambda|}\right) = \left(\sum_{\lambda\in \cPDO} x^{\dis(\lambda)} q^{|\lambda|}\right)^2.
	\end{align}
\end{theorem}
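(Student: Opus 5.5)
The plan is to deduce Theorem~\ref{th:FS} from Theorem~\ref{th:para-2-conv} by showing that the two generating functions coincide under a suitable change of variables; this is the equivalence promised in the introduction. First I compute the bivariate generating function for $\cPDO$ by diversity. For a fixed odd part size $k$ that occurs $m\ge 1$ times, there are $m$ choices for the designated copy. Summing over $m$ gives
\begin{align*}
	1+x\sum_{m\ge1} m q^{km} = 1+\frac{xq^k}{(1-q^k)^2} = \frac{1-(2-x)q^k+q^{2k}}{(1-q^k)^2}.
\end{align*}
Hence
\begin{align*}
	\sum_{\lambda\in\cPDO} x^{\dis(\lambda)}q^{|\lambda|} = \prod_{k\ \mathrm{odd}} \frac{1-(2-x)q^k+q^{2k}}{(1-q^k)^2}.
\end{align*}

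Next I choose $X$ so that $X+X^{-1}=2-x$. Then $1-(2-x)q^k+q^{2k}=(1-Xq^k)(1-q^k/X)$, so the numerator product becomes $(qX,q/X;q^2)_\infty$. The denominator is $(q;q^2)_\infty^{-2}=f_2^2/f_1^2$. Therefore the generating function equals
\begin{align*}
	\frac{f_2^2}{f_1^2}(qX,q/X;q^2)_\infty.
\end{align*}

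Finally I handle the change of variables. Since $x\mapsto X$ is given by an algebraic relation and both sides of \eqref{eq:2-conv-PDO-dis-comb} are formal power series in $q$ whose coefficients are polynomials in $x$, the identity \eqref{eq:para-2-conv} can be applied directly. That identity holds for every $X$ independent of $q$, and its coefficients are Laurent polynomials in $X$ that are symmetric under $X\leftrightarrow X^{-1}$, hence polynomials in $X+X^{-1}=2-x$. So it transfers coefficientwise to \eqref{eq:2-conv-PDO-dis-comb}. The argument also runs in reverse, which gives the claimed equivalence.

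The only delicate point is this last transfer. I would justify it by noting that the coefficients on both sides are polynomials in $x$ and that \eqref{eq:para-2-conv} holds for infinitely many values of $x$; for instance, $X$ ranging over the positive reals gives $x=2-X-X^{-1}$ ranging over infinitely many values. Two polynomials that agree at infinitely many points are equal, so the identity holds for all $x$.
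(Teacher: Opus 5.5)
Your proposal is correct and is essentially the paper's own argument: the same product $\prod_{k\ \mathrm{odd}}\bigl(1-(2-x)q^k+q^{2k}\bigr)/(1-q^k)^2$, the same substitution (the paper's $X=(2-x+\sqrt{x^2-4x})/2$ is an explicit root of $X+X^{-1}=2-x$), and then an appeal to Theorem~\ref{th:para-2-conv}, which the paper proves independently in Section~\ref{sec:para-2} by theta-dissection, so the reduction is not circular. Your justification of the transfer step, using symmetric Laurent polynomials and agreement at infinitely many values of $x$, is more careful than the paper's, which only remarks that $X$ is independent of $q$ because $x$ is.
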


Now we show why this combinatorial relation is equivalent to our analytic parametrization in \eqref{eq:para-2-conv}. 

\begin{proof}[Proof of the equivalence between \eqref{eq:para-2-conv} and \eqref{eq:2-conv-PDO-dis-comb}]
	By definition,
	\begin{align*}
		\sum_{\lambda\in \cPDO} x^{\dis(\lambda)} q^{|\lambda|} &= \prod_{i\ge 1} \big(1+xq^{2i-1}+2xq^{2(2i-1)}+3xq^{3(2i-1)}+\cdots\big)\\
		&= \frac{f_2^2}{f_1^2} \prod_{i\ge 1} \big(1+(x-2)q^{2i-1}+q^{4i-2}\big).
	\end{align*}
	Making the change of variables
	\begin{align}\label{eq:x-X}
		X = X(x):= \frac{2-x+\sqrt{x^2-4x}}{2},
	\end{align}
	we have
	\begin{align}
		\sum_{\lambda\in \cPDO} x^{\dis(\lambda)} q^{|\lambda|} = \frac{f_2^2}{f_1^2} (qX,q/X;q^2)_\infty.
	\end{align}
	Since $x$ is independent of $q$, so is $X$. The equivalence between \eqref{eq:para-2-conv} and \eqref{eq:2-conv-PDO-dis-comb} is now clear.
\end{proof}

Noting the appearance of the factors $(qX,q/X;q^2)_\infty$ and $(qY,q/Y;q^2)_\infty$ in the $3$-convolutive relation \eqref{eq:para-3-conv}, our next question is whether it has a combinatorial version in analogy with \eqref{eq:2-conv-PDO-dis-comb}. Although it is possible to interpret these two factors in terms of pairs of partitions in $\cPDO$, this explanation is not as neat as we wish. In this sense, we have to sacrifice the degree of freedom by considering a uniparametric specialization.

For the moment, let $\len_{\mathrm{e}}(\lambda)$ and $\dis_{\mathrm{o}}(\lambda)$ denote the total number of even parts and the number of different odd part sizes in $\lambda\in \cPD$, respectively. We have the following \emph{signed} counting for partitions with designated summands:
\begin{align*}
	&\sum_{\lambda\in\cPD}(-1)^{\len_{\mathrm{e}}(\lambda)}x^{\dis_{\mathrm{o}}(\lambda)}q^{|\lambda|} \\
	&\qquad= \prod_{i\ge 1} \big(1+xq^{2i-1}+2xq^{2(2i-1)}+\cdots\big) \big(1 - q^{2i} + 2q^{4i} - \cdots\big)\\
	&\qquad= \frac{f_2^3 f_6}{f_1^2 f_4^2} \prod_{i\ge 1}\big(1+(x-2)q^{2i-1}+q^{4i-2}\big).
\end{align*}
Using the same change of variables as in \eqref{eq:x-X},
\begin{align*}
	X = X(x):= \frac{2-x+\sqrt{x^2-4x}}{2},
\end{align*}
we derive the generating function identity
\begin{align}\label{eq:PD-signed-counting}
	\sum_{\lambda\in\cPD}(-1)^{\len_{\mathrm{e}}(\lambda)}x^{\dis_{\mathrm{o}}(\lambda)}q^{|\lambda|} = \frac{f_2^3 f_6}{f_1^2 f_4^2} (qX,q/X;q^2)_\infty.
\end{align}
Now the key observation is that the right-hand of \eqref{eq:PD-signed-counting} is exactly the $3$-convolutive series in \eqref{eq:para-3-conv} with $Y=-1$. That is,
\begin{align*}
	\frac{f_2^3 f_6}{f_1^2 f_4^2} (qX,q/X;q^2)_\infty = \frac{f_6}{f_2} (qX,q/X,-q,-q;q^2)_\infty.
\end{align*}
Therefore, we arrive at a neat $3$-convolutive analog to \eqref{eq:2-conv-PDO-dis-comb} in light of Theorem~\ref{th:para-3-conv}.

\begin{corollary}
	Let $x$ be independent of $q$. Then
	\begin{align}\label{eq:para-3-conv-PD}
		\UU_3\left(\sum_{\lambda\in\cPD}(-1)^{\len_{\mathrm{e}}(\lambda)}x^{\dis_{\mathrm{o}}(\lambda)}q^{|\lambda|}\right) = \left(\sum_{\lambda\in\cPD}(-1)^{\len_{\mathrm{e}}(\lambda)}x^{\dis_{\mathrm{o}}(\lambda)}q^{|\lambda|}\right)^3.
	\end{align}
\end{corollary}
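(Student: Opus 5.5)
This corollary is a direct consequence of Theorem~\ref{th:para-3-conv}, and the plan is to reduce \eqref{eq:para-3-conv-PD} to the specialization $Y=-1$ of \eqref{eq:para-3-conv}. There are two steps. First, I would justify the generating function identity \eqref{eq:PD-signed-counting}. Second, I would justify the eta-quotient identity that rewrites its right-hand side as $\frac{f_6}{f_2}(qX,q/X,-q,-q;q^2)_\infty$. Once both hold, the left-hand and right-hand sides of \eqref{eq:para-3-conv-PD} are the two sides of \eqref{eq:para-3-conv} with $Y=-1$. Note that $Y=-1$ is independent of $q$, and so is $X=X(x)$, since $x$ is. The corollary then follows as an identity of formal power series in $q$, with coefficients that are polynomials in $x$, equivalently Laurent polynomials symmetric in $X\leftrightarrow 1/X$.

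For the first step, I would argue factor by factor. A part size $2i-1$ contributes $1$ if it is absent. If it occurs $k\ge1$ times, it contributes $kx q^{k(2i-1)}$, since there are $k$ choices of designated copy and one factor $x$ for the new odd part size. Summing over $k$ gives
\begin{align*}
1+\frac{xq^{2i-1}}{(1-q^{2i-1})^2}=\frac{1+(x-2)q^{2i-1}+q^{4i-2}}{(1-q^{2i-1})^2}.
\end{align*}
An even part size $2i$ occurring $k$ times contributes $k(-1)^kq^{2ik}$. Summing over $k$, with $1$ for absence, gives
\begin{align*}
1-\frac{q^{2i}}{(1+q^{2i})^2}=\frac{1+q^{2i}+q^{4i}}{(1+q^{2i})^2}=\frac{(1-q^{6i})}{(1-q^{2i})(1+q^{2i})^2}.
\end{align*}
Taking the product over $i$ gives the prefactor
\begin{align*}
\frac{1}{(q;q^2)_\infty^2}\cdot\frac{f_6}{f_2(-q^2;q^2)_\infty^2}=\frac{f_2^2}{f_1^2}\cdot\frac{f_6 f_2^2}{f_2 f_4^2}=\frac{f_2^3f_6}{f_1^2f_4^2}.
\end{align*}
Here I use $(q;q^2)_\infty=f_1/f_2$ and $(-q^2;q^2)_\infty=f_4/f_2$. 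Since $X+1/X=2-x$, we have $(1-Xq^{2i-1})(1-q^{2i-1}/X)=1+(x-2)q^{2i-1}+q^{4i-2}$, which gives \eqref{eq:PD-signed-counting}.

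For the second step, I need $(-q,-q;q^2)_\infty=(-q;q^2)_\infty^2=f_2^4/(f_1^2f_4^2)$. Multiplying by $f_6/f_2$ gives $f_2^3f_6/(f_1^2f_4^2)$, which matches the prefactor. I do not expect a genuine obstacle anywhere: the only care needed is in the bookkeeping of the even-part factor, where the signs and the cyclotomic simplification $1+q^{2i}+q^{4i}=(1-q^{6i})/(1-q^{2i})$ must be handled correctly.
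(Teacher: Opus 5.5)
Your proposal is correct and follows the paper's route: derive \eqref{eq:PD-signed-counting} from the product formula using $X+1/X=2-x$, observe that $\frac{f_2^3f_6}{f_1^2f_4^2}=\frac{f_6}{f_2}(-q,-q;q^2)_\infty$, and apply Theorem~\ref{th:para-3-conv} at $Y=-1$. Your explicit evaluation of the even-part factor via $1+q^{2i}+q^{4i}=(1-q^{6i})/(1-q^{2i})$, and your remark that the identity lives in $\mathbb{Z}[x][[q]]$ (symmetric under $X\leftrightarrow 1/X$), fill in details that the paper leaves implicit.
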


\section{First $2$-convolutive parametrization, a theta-dissection proof}\label{sec:para-2}

It is notable that, in \cite{FuSel}, the combinatorial relation \eqref{eq:2-conv-PDO-dis-comb} was demonstrated by the theory of Chebyshev polynomials. In this section, we provide an alternative, and in some sense, more intrinsic proof by a theta-dissection technique applied to the series in \eqref{eq:para-2-conv}.

To begin with, we share the following simple yet useful criterion for proving $m$-convolutive relations.

\begin{lemma}\label{criterion-new}
	Fix $m\geq 2$. Assume $A(q),B(q)\in \mathbb{Z}[[q]]$ with $B(q)$ not identical to zero. Then the product $A(q)B(q^m)$ is $m$-convolutive if and only if 
	\begin{align}\label{eq:criterion-new}
		\UU_m\big(A(q)\big) = A(q)^m\cdot \frac{B(q^m)^m}{B(q)}.    
	\end{align}
\end{lemma}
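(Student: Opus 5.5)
The key fact is that $\UU_m$ is linear and satisfies the pull-out rule
\begin{align*}
	\UU_m\big(F(q)\,G(q^m)\big) = G(q)\,\UU_m\big(F(q)\big)
\end{align*}
for any power series $F$ and $G$. To check this, write $F(q)=\sum_n c_n q^n$ and $G(q)=\sum_k g_k q^k$. A term $c_n g_k q^{n+mk}$ has exponent divisible by $m$ exactly when $m\mid n$. In that case it contributes $c_n g_k q^{n/m+k}$ to $\UU_m$. Summing over all such terms gives $\UU_m(F)\cdot G(q)$.

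Applying the rule with $F=A$ and $G=B$, the left-hand side of the defining relation \eqref{def:m-conv} for the series $A(q)B(q^m)$ becomes
\begin{align*}
	\UU_m\big(A(q)B(q^m)\big)=B(q)\,\UU_m\big(A(q)\big).
\end{align*}
The right-hand side is $A(q)^m B(q^m)^m$. So $A(q)B(q^m)$ is $m$-convolutive if and only if
\begin{align*}
	B(q)\,\UU_m\big(A(q)\big)=A(q)^m B(q^m)^m.
\end{align*}

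It remains to divide this identity by $B(q)$. The only point requiring care is that $B(q)$ need not be invertible in $\mathbb{Z}[[q]]$, since its constant term may vanish or may fail to be $\pm1$. Two observations handle this.
\begin{itemize}
	\item The ring $\mathbb{Q}((q))$ of formal Laurent series is a field containing $\mathbb{Z}[[q]]$. Since $B\neq0$, it has an inverse there, so the quotient $B(q^m)^m/B(q)$ in \eqref{eq:criterion-new} makes sense.
	\item $\mathbb{Z}[[q]]$ is an integral domain, so multiplication by the nonzero element $B(q)$ is injective.
\end{itemize}
Hence the displayed identity holds if and only if $\UU_m(A)=A^m\,B(q^m)^m/B(q)$, which is \eqref{eq:criterion-new}. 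Both directions of the equivalence follow at once.

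There is no real obstacle here. The pull-out property of $\UU_m$ does all the work, and the remaining step is only the bookkeeping of dividing by a nonzero power series.
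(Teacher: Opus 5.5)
Your proof is correct and follows the paper's argument: both rest on the pull-out identity $\UU_m\big(A(q)B(q^m)\big)=\UU_m\big(A(q)\big)\,B(q)$, followed by division by $B(q)$. You also verify the pull-out rule coefficientwise and justify the division in $\mathbb{Q}((q))$, both of which the paper leaves implicit.
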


\begin{proof}
	According to the defining relation \eqref{def:m-conv}, we see that $A(q)B(q^m)$ is $m$-convolutive if and only if 
	\begin{align*}
		\big(A(q)B(q^m)\big)^m &= \UU_m\big(A(q)B(q^m)\big) = \UU_m\big(A(q)\big)\cdot B(q),
	\end{align*}
	which matches \eqref{eq:criterion-new} after rearranging the factors.
\end{proof}

Now we are ready to show Theorem~\ref{th:para-2-conv}.

\begin{proof}[Proof of Theorem~\ref{th:para-2-conv}]
	In Lemma~\ref{criterion-new}, we fix $m=2$ and choose
	\begin{align*}
		A(q) = \frac{1}{f_1^2} (qX,q/X,q^2;q^2)_\infty, \qquad\qquad B(q) = f_1.
	\end{align*}
	Then it remains to show
	\begin{align*}
		\UU_2\big(A(q)\big) = \frac{f_2^2}{f_1^5} (qX,q/X,q^2;q^2)_\infty^2.
	\end{align*}
	Recall from \cite[p.~14, eq.~(1.9.4)]{Hir} with $q$ replaced by $-q$ that
	\begin{align*}
		 \frac{1}{f_1^2}=\frac{f_8^5}{f_2^5f_{16}^2}+2q\frac{f_4^2f_{16}^2}{f_2^5f_8}.
	\end{align*}
	Meanwhile, by the Jacobi triple product identity \cite[p.~1, eq.~(1.1.1)]{Hir}, we have
	\begin{align*}
		(qX,q/X,q^2;q^2)_\infty &= \sum_{m=-\infty}^\infty (-1)^m X^m q^{m^2}\\
		&= \sum_{m=-\infty}^\infty X^{2m} q^{(2m)^2} - \sum_{m=-\infty}^\infty X^{2m+1} q^{(2m+1)^2}\\
		&= (-q^4X^2, -q^4/X^2, q^{8};q^{8})_\infty - qX(-q^{8}X^2,-1/X^2,q^{8};q^{8})_\infty.
	\end{align*}
	Thus,
	\begin{align*}
		\UU_2\big(A(q)\big) &= \frac{f_2^2}{f_1^5} \Bigg(\frac{f_4^5}{f_2^2f_8^2}(-q^2X^2,-q^2/X^2,q^4;q^4)_\infty \\
		&\quad-2qX \frac{f_8^2}{f_4}(-q^4X^2,-1/X^2,q^4;q^4)_\infty\Bigg),
	\end{align*}
	so that it is sufficient to show that
	\begin{align*}
		(qX,q/X,q^2;q^2)_\infty^2 &= \frac{f_4^5}{f_2^2f_8^2}(-q^2X^2,-q^2/X^2,q^4;q^4)_\infty\\
		&\quad-2qX \frac{f_8^2}{f_4}(-q^4X^2,-1/X^2,q^4;q^4)_\infty.
	\end{align*}
	Finally, we note that
	\begin{align*}
		(qX,q/X,q^2;q^2)_\infty^2 &= \sum_{n_1,n_2=-\infty}^\infty (-1)^{n_1-n_2} X^{n_1+n_2} q^{n_1^2+n_2^2}\\
		&= \sum_{\substack{m_1,m_2=-\infty\\m_1\equiv m_2 \bmod{2}}}^\infty (-1)^{m_1} X^{m_2} q^{\frac{1}{2}(m_1^2+m_2^2)}\\
		&= \sum_{l_1,l_2=\infty}^\infty X^{2l_2} q^{2(l_1^2+l_2^2)} - qX \sum_{l_1,l_2=\infty}^\infty X^{2l_2} q^{2(l_1^2+l_2^2)+2(l_1+l_2)}\\
		&= (-q^2,-q^2,q^4;q^4)_\infty(-q^2X^2,-q^2/X^2,q^4;q^4)_\infty\\
		&\quad - qX(-q^4,-1,q^4;q^4)_\infty(-q^4X^2,-1/X^2,q^4;q^4)_\infty,
	\end{align*}
	as desired.
\end{proof}

\section{Second $2$-convolutive parametrization}\label{sec:para-2-II}

Using the same theta-dissection technique but with a more intricate analysis, we can provide an analogous proof for Theorem~\ref{th:para-2-conv-II}. Let us start by rewriting the series in \eqref{eq:para-2-conv-II} as
\begin{align*}
	\frac{f_2 f_6}{f_1 f_3} (qX,q^2/X,q^2X,q/X;q^3)_\infty &= \frac{1}{f_1f_3} (qX,q^5/X,q^6;q^6)_\infty(q^5X,q/X,q^6;q^6)_\infty\\
	&\quad\times \frac{f_2}{f_6} (q^2X,q^4/X,q^4X,q^2/X;q^6)_\infty.
\end{align*}
Now in Lemma~\ref{criterion-new}, we fix $m=2$ and choose
\begin{align*}
	A(q) = \frac{1}{f_1f_3} (qX,q^5/X,q^6;q^6)_\infty(q^5X,q/X,q^6;q^6)_\infty
\end{align*}
and
\begin{align*}
	B(q) = \frac{f_1}{f_3} (qX,q^2/X,q^2X,q/X;q^3)_\infty.
\end{align*}
Then it remains to show
\begin{align*}
	\UU_2\big(A(q)\big) = \frac{f_2^2 f_6^2}{f_1^3 f_3^3} (qX,q^2/X,q^3;q^3)_\infty (q^2X,q/X,q^3;q^3)_\infty.
\end{align*}
Recall from \cite[p.~218, eq.~(25.1.7)]{Hir} that
\begin{align*}
	\frac{1}{f_1f_3}=\frac{f_8^2f_{12}^5}{f_2^2f_4f_6^4f_{24}^2}+q\frac{f_4^5f_{24}^2}{f_2^4f_6^2f_8^2f_{12}}.
\end{align*}
Also, by the Jacobi triple product identity,
\begin{align*}
	&(qX,q^5/X,q^6;q^6)_\infty (q^5X,q/X,q^6;q^6)_\infty\\
	&\quad = \sum_{n_1,n_2=-\infty}^\infty (-1)^{n_1+n_2} X^{n_2-n_1} q^{3(n_1^2+n_2^2)+2(n_1+n_2)}\\
	&\quad = \sum_{\substack{m_1,m_2=-\infty\\m_1\equiv m_2 \bmod{2}}}^\infty (-1)^{m_1} X^{m_2} q^{\frac{3}{2}(m_1^2+m_2^2)+2m_1}\\
	&\quad = \sum_{l_1,l_2=-\infty}^\infty X^{2l_2} q^{6l_1^2+6l_2^2+4l_1} - qX \sum_{l_1,l_2=-\infty}^\infty X^{2l_2} q^{6l_1^2+6l_2^2-2l_1+6l_2}\\
	&\quad = (-q^2,-q^{10},q^{12};q^{12})_\infty (-q^6X^2,-q^6/X^2,q^{12};q^{12})_\infty\\
	&\quad\quad - qX (-q^4,-q^8,q^{12};q^{12})_\infty (-q^{12}X^2,-1/X^2,q^{12};q^{12})_\infty\\
	&\quad = \frac{f_4^2f_6f_{24}}{f_2f_8f_{12}}(-q^6X^2,-q^6/X^2,q^{12};q^{12})_\infty - qX \frac{f_8 f_{12}^2}{f_4 f_{24}}(-q^{12}X^2,-1/X^2,q^{12};q^{12})_\infty.
\end{align*}
It follows that
\begin{align*}
	\UU_2\big(A(q)\big) &= \frac{f_2 f_4 f_6^4}{f_1^3 f_3^3 f_{12}}(-q^3X^2,-q^3/X^2,q^{6};q^{6})_\infty\\
	&\quad - qX \frac{f_2^4 f_6 f_{12}}{f_1^4 f_3^2 f_4}(-q^{6}X^2,-1/X^2,q^{6};q^{6})_\infty.
\end{align*}
Finally, we note that
\begin{align*}
	&(qX,q^2/X,q^3;q^3)_\infty (q^2X,q/X,q^3;q^3)_\infty\\
	&\quad = \sum_{n_1,n_2=-\infty}^\infty (-1)^{n_1+n_2} X^{n_2-n_1} q^{\frac{3}{2}(n_1^2+n_2^2)+\frac{1}{2}(n_1+n_2)}\\
	&\quad = \sum_{\substack{m_1,m_2=-\infty\\m_1\equiv m_2 \bmod{2}}}^\infty (-1)^{m_1} X^{m_2} q^{\frac{3}{4}(m_1^2+m_2^2)+\frac{1}{2}m_1}\\
	&\quad = \sum_{l_1,l_2=-\infty}^\infty X^{2l_2} q^{3l_1^2+3l_2^2+l_1} - qX \sum_{l_1,l_2=-\infty}^\infty X^{2l_2} q^{3l_1^2+3l_2^2-2l_1+3l_2}\\
	&\quad = (-q^2,-q^4,q^6;q^6)_\infty (-q^3X^2,-q^3/X^2,q^6;q^6)_\infty\\
	&\quad\quad - qX (-q,-q^5,q^6;q^6)_\infty (-q^6X^2,-1/X^2,q^6;q^6)_\infty\\
	&\quad = \frac{f_4 f_6^2}{f_2 f_{12}}(-q^3X^2,-q^3/X^2,q^6;q^6)_\infty - qX \frac{f_2^2 f_3 f_{12}}{f_1 f_4 f_6} (-q^6X^2,-1/X^2,q^6;q^6)_\infty,
\end{align*}
so that
\begin{align*}
	&\frac{f_2^2 f_6^2}{f_1^3 f_3^3}(qX,q^2/X,q^3;q^3)_\infty (q^2X,q/X,q^3;q^3)_\infty\\
	&\quad = \frac{f_2 f_4 f_6^4}{f_1^3 f_3^3 f_{12}}(-q^3X^2,-q^3/X^2,q^{6};q^{6})_\infty - qX \frac{f_2^4 f_6 f_{12}}{f_1^4 f_3^2 f_4}(-q^{6}X^2,-1/X^2,q^{6};q^{6})_\infty.
\end{align*}
This matches the expression for $\UU_2\big(A(q)\big)$ we obtained earlier, thereby concluding the proof.

\section{$3$-Convolutive parametrization}\label{sec:para-3}
To prove Theorem~\ref{th:para-3-conv}, we fix $m=3$ in Lemma~\ref{criterion-new} and choose
\begin{align*}
	A(q) = \frac{1}{f_2^3} (qX,q/X,q^2;q^2)_\infty (qY,q/Y,q^2;q^2)_\infty, \qquad\qquad B(q) = f_2.
\end{align*}
It remains to show
\begin{align*}
	\UU_3\big(A(q)\big) = \frac{f_6^3}{f_2^{10}} (qX,q/X,q^2;q^2)_\infty^3 (qY,q/Y,q^2;q^2)_\infty^3.
\end{align*}
Recall from \cite[p.~184, eq.~(21.3.7)]{Hir} that
\begin{align*}
	f_1^3=a(q^3)f_3-3qf_9^3,
\end{align*}
where according to \cite[p.~179, eqs.~(21.1.1) and (21.1.2)]{Hir},
\begin{align*}
	a(q) := 1+6\sum_{n=0}^\infty \left(\frac{q^{3n+1}}{1-q^{3n+1}}-\frac{q^{3n+2}}{1-q^{3n+2}}\right).
\end{align*}
Now replacing $q$ with $\omega q$ and $\omega^2 q$ in the above where $\omega:= e^{\frac{2\pi i}{3}}$, and then multiplying the two results, we have
\begin{align*}
	\frac{1}{f_1^3}=a(q^3)^2\frac{f_9^3}{f_3^{10}}+3qa(q^3)\frac{f_9^6}{f_3^{11}}
	+9q^2\frac{f_9^9}{f_3^{12}}.
\end{align*} 
Meanwhile, by the Jacobi triple product identity, this time with a $3$-dissection, we have
\begin{align*}
	&(qX,q/X,q^2;q^2)_\infty\\
	&\quad= \sum_{m=-\infty}^\infty (-1)^m X^m q^{m^2}\\
	&\quad= \sum_{m=-\infty}^\infty (-1)^m X^{3m} q^{(3m)^2} - \sum_{m=-\infty}^\infty (-1)^m (X^{3m+1}+X^{-3m-1}) q^{(3m+1)^2}\\
	&\quad= F_1(X,q^3) - qXF_2(X,q^3) - qX^{-1}F_3(X,q^3),
\end{align*}
where
\begin{align*}
	F_1(X,q) &:= (q^3X^3, q^3/X^3, q^{6};q^{6})_\infty,\\
	F_2(X,q) &:= (q^{5}X^3,q/X^3,q^{6};q^{6})_\infty,\\
	F_3(X,q) &:= (qX^3,q^{5}/X^3,q^{6};q^{6})_\infty.
\end{align*}
Thus,
\begin{align*}
	\UU_3\big(A(q)\big) &= a(q^2)^2F_1(X,q)F_1(Y,q)\frac{f_6^3}{f_2^{10}}\\
	&\quad - 3qa(q^2)F_1(X,q)\big(YF_2(Y,q)+Y^{-1}F_3(Y,q)\big)\frac{f_6^6}{f_2^{11}}\\
	&\quad - 3qa(q^2)F_1(Y,q)\big(XF_2(X,q)+X^{-1}F_3(X,q)\big)\frac{f_6^6}{f_2^{11}}\\
	&\quad + 9q^2 \big(XF_2(X,q)+X^{-1}F_3(X,q)\big)\big(YF_2(Y,q)+Y^{-1}F_3(Y,q)\big)\frac{f_6^9}{f_2^{12}}.
\end{align*}
Finally, we need \cite[p.~180, eq.~(21.2.4)]{Hir}
\begin{align*}
	(qX,q/X,q^2;q^2)_\infty^3 = a(q^2)F_1(X,q) - 3q \frac{f_6^3}{f_2}\big(XF_2(X,q) + X^{-1}F_3(X,q)\big).
\end{align*}
It follows that
\begin{align*}
	&\frac{f_6^3}{f_2^{10}} (qX,q/X,q^2;q^2)_\infty^3 (qY,q/Y,q^2;q^2)_\infty^3\\
	&\qquad = \frac{f_6^3}{f_2^{10}} \left(a(q^2)F_1(X,q) - 3q \frac{f_6^3}{f_2}\big(XF_2(X,q) + X^{-1}F_3(X,q)\big)\right)\\
	&\qquad\quad\times \left(a(q^2)F_1(Y,q) - 3q \frac{f_6^3}{f_2}\big(YF_2(Y,q) + Y^{-1}F_3(Y,q)\big)\right),
\end{align*}
which is exactly the same as the previous expression for $\UU_3\big(A(q)\big)$ after expanding the product. The proof is therefore complete.

\section{Conclusion}\label{sec:conclusion}

We close this work with three sets of comments and questions. 

First, an important problem not yet explored is the possibility of unifying the two $2$-convolutive parametrizations \eqref{eq:para-2-conv} and \eqref{eq:para-2-conv-II} in a biparametric way similar to that for \eqref{eq:para-3-conv}. Ideally, the related series should take the form
\begin{align*}
    P(q)(qX,q/X;q^2)_\infty(qY,q^2/Y,q^2Y,q/Y;q^3)_\infty,
\end{align*}
where the prefactor $P(q)$ is an eta-product. For the moment, specializing $(qX,q/X;q^2)_\infty$ to an eta-product, which may have the following options
\begin{align*}
    \frac{f_1^2}{f_2^2},\quad \frac{f_2^4}{f_1^2 f_4^2},\quad \frac{f_2 f_3}{f_1 f_6},\quad \frac{f_4^2}{f_2 f_8},\quad \frac{f_1 f_4 f_6^2}{f_2^2 f_3 f_{12}},
\end{align*}
then its product with $P(q)$ should be $\frac{f_2 f_6}{f_1 f_3}$, the prefactor in \eqref{eq:para-2-conv-II}. Thus, $P(q)$ is one of
\begin{align*}
    \frac{f_2^3 f_6}{f_1^3 f_3},\quad\frac{f_1 f_4^2 f_6}{f_2^3 f_3},\quad\frac{f_6^2}{f_3^2},\quad\frac{f_2^2 f_6 f_8}{f_1 f_3 f_4^2},\quad\frac{f_2^3 f_{12}}{f_1^2 f_4 f_6}.
\end{align*}
Similarly, we may specialize $(qY,q^2/Y,q^2Y,q/Y;q^3)_\infty$ as
\begin{align*}
    \frac{f_1^2}{f_3^2},\quad \frac{f_2^2 f_3^2}{f_1^2 f_6^2},\quad \frac{f_3^2}{f_1 f_9},\quad \frac{f_4 f_6}{f_2 f_{12}},\quad \frac{f_1 f_6^2 f_9}{f_2 f_3^2 f_{18}},
\end{align*}
and multiplying it by $P(q)$ should give $\frac{f_2^2}{f_1^2}$, the prefactor in \eqref{eq:para-2-conv}. Then the choices of $P(q)$ in this case include
\begin{align*}
    \frac{f_2^2 f_3^2}{f_1^4},\quad \frac{f_6^2}{f_3^2},\quad \frac{f_2^2 f_9}{f_1 f_3^2},\quad \frac{f_2^3 f_{12}}{f_1^2 f_4 f_6},\quad \frac{f_2^3 f_3^2 f_{18}}{f_1^3 f_6^2 f_9}.
\end{align*}
Now the overlapping expressions for $P(q)$ are
\begin{align*}
    \frac{f_6^2}{f_3^2},\quad \frac{f_2^3 f_{12}}{f_1^2 f_4 f_6}.
\end{align*}
However, a direct verification reveals that neither of these options produces the desired $2$-convolutivity.

Second, since all other $2$- and $3$-convolutive primitive eta-products in this work can be parametrized, it is curious to ask if the series (II.7) in Table~\ref{tab:list-2-conv} is \emph{truly} sporadic, or if there is a missing parametrization associated with it. In addition, the parametric series in Theorems~\ref{th:para-2-conv}, \ref{th:para-2-conv-II}, and \ref{th:para-3-conv} reduce to a primitive eta-product only for a small selection of parameters, all identified in Tables~\ref{tab:list-2-conv} and \ref{tab:list}. It is still unclear if one can construct an infinite family of $2$- or $3$-convolutive sequences whose generating functions are primitive eta-products, or on the opposite side, if the list of primitive $2$- and $3$-convolutive eta-products is finite.

Third, in a separate project~\cite{CEFS3}, we construct a bijective proof for the basic combinatorial relation \eqref{eq:2-conv-PDO-comb}. However, when the contribution of the ``$\dis$'' statistic is inserted, the bijectivity for \eqref{eq:2-conv-PDO-dis-comb} remains open. For the $3$-convolutive case, it is an easy exercise, which will be left to the motivated reader, to show that the coefficient sequences represented in the left-hand column of Table~\ref{tab:list} are all nonnegative. This fact suggests that the series in the left-hand column should be the counting functions for certain partition sets, thereby shedding light on potential combinatorial proofs of the $3$-convolutivity. In this direction, we have witnessed an interesting paper by Liu and Tian~\cite{LiuTian} on a combinatorial treatment of the series (III.1). For a more general consideration, the parametric series in \eqref{eq:para-3-conv} with $X$ and $Y$ replaced by $-X$ and $-Y$, namely,
\begin{align*}
	\frac{f_6}{f_2} (-qX,-q/X,-qY,-q/Y;q^2)_\infty,
\end{align*}
belongs to $\mathbb{N}[X,X^{-1},Y,Y^{-1}][[q]]$, so it should be the counting function for certain partition tuples. It would be intriguing to see a bijective proof of \eqref{eq:para-3-conv} along this line. As we have remarked in Section~\ref{sec:para-2-comb}, such a partition tuple interpretation might not be as neat as we would like to see. Then a combinatorial study of the neat relation \eqref{eq:para-3-conv-PD}, which corresponds to the $Y=-1$ case of \eqref{eq:para-3-conv}, is also of great interest.

\subsection*{Acknowledgements}

Shane Chern was supported by the FWF Austrian Science Fund (grant no.~10.55776/F1002). Shishuo Fu was supported by the National Natural Science Foundation of China (grant no.~12171059) and the Fundamental Research Funds for the Central Universities (grant no.~2025CDJ-IAISYB-008). 

\bibliographystyle{amsplain}

\begin{thebibliography}{9}
	
	\bibitem{ALL2002}
	G. E. Andrews, R. P. Lewis, and J. Lovejoy, Partitions with designated summands, \textit{Acta Arith.} \textbf{105} (2002), no. 1, 51--66.
	
	\bibitem{CEFS1}
	S. Chern, D. Eichhorn, S. Fu, and J. A. Sellers, Convolutive sequences,~I:~Through the lens of integer partition functions, \textit{Exp. Math.} (2026), 1--14. \doi{10.1080/10586458.2025.2604777}.

    \bibitem{CEFS3}
    S. Chern, D. Eichhorn, S. Fu, and J. A. Sellers, Convolutive sequences, ~III:~Bijective proofs of the $2$-convolutivity of the PDO function and beyond, in preparation.
	
	\bibitem{FuSel}
	S. Fu and J. A. Sellers, A refined view of a curious identity for partitions into odd parts with designated summands, \textit{Discrete Math.} \textbf{348} (2025), no. 12, Paper No. 114620, 13 pp.
	
	\bibitem{Hir}
	M. D. Hirschhorn, \textit{The power of $q$. A personal journey}, Springer, Cham, 2017.
	
	\bibitem{LiuTian}
	J.-C. Liu and Y. Tian, A bijective proof of a cubic convolution identity for $3$-regular overpartitions, preprint. \doi{10.13140/RG.2.2.23921.24166}.
	
	\bibitem{OEIS}
	N. J. A. Sloane, \textit{On-Line Encyclopedia of Integer Sequences}, \url{https://oeis.org}.
	
\end{thebibliography}

\newpage
	
\end{document}